\documentclass[10pt]{elsarticle}

\usepackage{amsfonts,amsmath,amsthm,amscd,amssymb,latexsym,mathrsfs,enumitem,tikz}
\usepackage[english]{babel}
\usetikzlibrary{positioning,arrows,shapes,shadows}

\setlength{\voffset}{-20mm}

\setlength{\hoffset}{-22mm}

\setlength{\textwidth}{175mm}

\setlength{\textheight}{250mm}

\theoremstyle{plain}

\newtheorem{theorem}{Theorem}
\newtheorem{lemma}[theorem]{Lemma}
\newtheorem{proposition}[theorem]{Proposition}
\newtheorem{corollary}[theorem]{Corollary}
\newtheorem{question}{Question}
\newdefinition{remark}{Remark}
\newdefinition{definition}{Definition}
\newdefinition{example}{Example}
\newproof{pf}{Proof}

\newtheorem{thmm}{Theorem}

\sloppy
\tolerance=3000

\numberwithin{equation}{section}

\begin{document}

\title{Extending Baire-one functions on compact spaces}
\author{Olena Karlova}
\ead{maslenizza.ua@gmail.com}
\author{Volodymyr Mykhaylyuk}
\ead{vmykhalyuk@ukr.net}

\address{Department of Mathematical Analysis, Faculty of Mathematics and Informatics, Yurii Fedkovych Chernivtsi National University, Kotsyubyns'koho str., 2, Chernivtsi, Ukraine}

\begin{abstract}
We answer a question of O. Kalenda and J. Spurn\'{y} from~\cite{Ka} and give an example of a completely regular hereditarily Baire  space $X$ and a Baire-one function $f:X\to [0,1]$ which   can not be extended to a Baire-one function on $\beta X$.
\end{abstract}

\begin{keyword}
extension; Baire-one function; fragmented function; countably fragmented function

  \MSC Primary 54C20, 26A21; Secondary 54C30, 54C50
\end{keyword}

\maketitle

\section{Introduction}
The classical Kuratowski's extension theorem~\cite[35.VI]{Kur:Top:Eng1} states that any map $f:E\to Y$ of the first Borel class to a Polish space $Y$ can be extended to a map $g:X\to Y$ of the first Borel class if $E$ is a $G_\delta$-subspace of a metrizable space $X$. Non-separable version of Kuratowski's theorem was proved by Hansell~\cite[Theorem 9]{Hansell:1974}, while abstract topological versions of Kuratowski's theorem were developed in~\cite{Hol,Ka,Karlova:CMUC:2013}.

Recall that a map $f:X\to Y$ between topological spaces $X$ and $Y$ is said to be
\begin{itemize}[topsep=2pt, itemsep=-2pt]
 \item[-] {\it  Baire-one}, $f\in {\rm B}_1(X,Y)$, if it is a pointwise limit of a sequence of continuous maps $f_n:X\to Y$;

 \item[-] {\it functionally $F_\sigma$-measurable} or {\it of the first functional Borel class}, $f\in {\rm K}_1(X,Y)$, if the preimage $f^{-1}(V)$ of any open set $V\subseteq Y$ is a union of a sequence of zero sets in $X$.
\end{itemize}
Notice that every functionally $F_\sigma$-measurable map belongs to the first Borel class for any $X$ and $Y$; the converse inclusion is true for perfectly  normal $X$; moreover, for a topological space $X$ and a metrizable  separable connected and locally path-connected space $Y$ we have the equality ${\rm B}_1(X,Y)={\rm K}_1(X,Y)$~(see \cite{KM:Umzh}).

Kalenda and Spurn\'{y}  proved the following result  \cite[Theorem 13]{Ka}.
\begin{thmm}
  Let $E$ be a Lindel\"{o}f hereditarily Baire subset of a completely
regular space $X$ and $f:E\to\mathbb R$ be a Baire-one function. Then there exists a Baire-one function $g:X\to\mathbb R$ such that $g=f$ on $E$.
\end{thmm}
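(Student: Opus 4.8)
\medskip
\noindent\emph{Sketch of a proof.}
The plan is to turn the Baire-one hypothesis on $E$ into a concrete countable fragmentation of $f$, to carry this over to $X$ through a $z$-embedding, and finally to reassemble the extension as a uniform limit of simple Baire-one functions.

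\textbf{Step 1 (fragmentation on $E$).} I would first check that for every nonempty closed $F\subseteq E$ the restriction $f|_{F}$ has a point of continuity. Writing $f=\lim_{n}f_{n}$ pointwise with $f_{n}\colon E\to\mathbb{R}$ continuous, the sets $F_{n,k}=\{x\in F:|f_{i}(x)-f_{j}(x)|\le 1/k\text{ for all }i,j\ge n\}$ are closed and cover $F$; since $F$ is a Baire space --- here the hereditary Baire property of $E$ enters --- some $F_{n,k}$ has nonempty interior in $F$, and, as $|f-f_{n}|\le 1/k$ on it, $f$ has oscillation at most $2/k$ at each point of that interior. Letting $k\to\infty$ gives the claim, i.e. $f$ is fragmented on $E$.

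\textbf{Step 2 (countable fragmentation --- the main obstacle).} Fix $\varepsilon>0$. The crucial assertion is that $E$ is covered by \emph{countably many} cozero sets $C_{1}^{\varepsilon},C_{2}^{\varepsilon},\dots$ with $\mathrm{osc}(f|_{C_{k}^{\varepsilon}})<\varepsilon$. To obtain it I would run the oscillation derivation $F^{0}=E$, $F^{\alpha+1}=\{x\in F^{\alpha}:\mathrm{osc}(f|_{F^{\alpha}})(x)\ge\varepsilon\}$, $F^{\lambda}=\bigcap_{\alpha<\lambda}F^{\alpha}$, which by Step 1 is strictly decreasing as long as it is nonempty, and where (by complete regularity of $E$) the set removed at each step can be taken to contain a nonempty relatively open piece lying in a zero set of oscillation $<\varepsilon$. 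The point that must then be proved --- and this is where essentially all the difficulty is concentrated --- is that, using Lindel\"{o}fness of $E$ together with the full strength of the Baire-one hypothesis (mere fragmentability will not do), this derivation is exhausted after countably many steps, so that countably many such zero sets already cover $E$. Granting this, one disjointifies: the sets $C_{k}^{\varepsilon}\setminus\bigcup_{j<k}C_{j}^{\varepsilon}$ are functionally ambiguous in $E$ (finite Boolean combinations of cozero sets are), they partition $E$, and $f$ varies by less than $\varepsilon$ on each of them; choosing one value of $f$ on each piece and taking $\varepsilon=2^{-n}$ gives a simple function $f_{n}\colon E\to\mathbb{R}$ with functionally ambiguous level sets and $\sup_{E}|f_{n}-f|\le 2^{-n}$. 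Each $f_{n}$ is Baire-one on $E$, since its preimages of open sets, being countable unions of functionally ambiguous sets, are functionally $F_{\sigma}$, so $f_{n}\in{\rm K}_{1}(E,\mathbb{R})={\rm B}_{1}(E,\mathbb{R})$. (The counterexample constructed later in the paper shows that hereditary Baireness alone, without Lindel\"{o}fness, does not suffice for this step, hence for the theorem.)

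\textbf{Step 3 (transfer to $X$ and stabilisation).} Being Lindel\"{o}f, $E$ is $z$-embedded in the completely regular space $X$, so each functionally $F_{\sigma}$ --- hence each functionally ambiguous --- subset of $E$ is the trace of one in $X$; a Kuratowski-type argument for functionally $F_{\sigma}$-measurable $\mathbb{R}$-valued maps (cf.\ the extension results cited in the introduction) then extends $f_{n}$ to a Baire-one function $\widehat{g}_{n}\colon X\to\mathbb{R}$. The $\widehat{g}_{n}$ may fail to converge off $E$, so instead of using them directly I would set $\widehat{f}_{1}=\widehat{g}_{1}$ and
\[
\widehat{f}_{n+1}=\max\bigl\{\widehat{f}_{n}-2^{-n+1},\ \min\{\widehat{f}_{n}+2^{-n+1},\ \widehat{g}_{n+1}\}\bigr\}.
\]
Each $\widehat{f}_{n}$ is Baire-one (a finite lattice combination of Baire-one functions); inductively $\widehat{f}_{n}=f_{n}$ on $E$, because there $f_{n+1}$ lies strictly between $f_{n}-2^{-n+1}$ and $f_{n}+2^{-n+1}$; and $\|\widehat{f}_{n+1}-\widehat{f}_{n}\|_{\infty}\le 2^{-n+1}$, so $(\widehat{f}_{n})$ converges uniformly on $X$ to a function $g$, which is Baire-one as a uniform limit of such and satisfies $g|_{E}=\lim_{n}f_{n}=f$.
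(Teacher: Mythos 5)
Your architecture is the right one, and it matches the machinery this paper actually builds in Section~\ref{s22}: Step~1 is the standard "hereditarily Baire $\Rightarrow$ barely continuous $\Rightarrow$ fragmented" argument (Remark~1(a)); Step~3 is essentially Proposition~\ref{prop:ext_countable_frag_ness} combined with Lemma~\ref{lem:unif_conv_ext} (your min--max stabilisation is a harmless variant of the truncation used there, and a Lindel\"{o}f subspace of a completely regular space is indeed $z$-embedded). Note, however, that the paper does not prove Theorem~A at all --- it is quoted from \cite[Theorem 13]{Ka} --- so the only honest comparison is with the general extension mechanism of Section~\ref{s22}, relative to which the entire content of Theorem~A is concentrated in your Step~2.

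And Step~2 is a genuine gap, not a routine verification you may defer. You assert that the oscillation derivation $F^{0}\supseteq F^{1}\supseteq\cdots$ terminates after countably many steps "using Lindel\"{o}fness together with the full strength of the Baire-one hypothesis", but you give no argument, and the mechanism you hint at does not work as stated: Lindel\"{o}fness places no countability bound on strictly decreasing, limit-continuous transfinite chains of closed sets (in the compact space $[0,\omega_{1}]$ the sets $F^{\alpha}=[\alpha,\omega_{1}]$ form such a chain of length $\omega_{1}$), so the countable length cannot come from the topology of $E$ alone; it must be extracted from the function. Indeed, by Theorem~\ref{thm:main} of this paper, the statement "every Baire-one function on a Lindel\"{o}f hereditarily Baire completely regular space is functionally countably fragmented" is essentially equivalent to Theorem~A itself, so leaving it unproved is circular. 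The known route (used in the compact case in Proposition~\ref{prop:ex_of_countablyfrag}(3) and, in more elaborate form, in \cite{Ka}) is to factor $f=g\circ\varphi$ through a second countable space via the diagonal $\varphi=(f_{n})_{n}$ of the approximating sequence, where the derivation automatically stabilises countably; but then one must prove that $g$ is still fragmented on $\varphi(E)$, i.e.\ that every nonempty closed subset of the image contains a relatively open piece of small oscillation. That is exactly where Lindel\"{o}fness and hereditary Baireness of $E$ must be combined nontrivially (one has to transfer Baire category statements from $\varphi^{-1}(F)$ down to $F$, which fails for general continuous images of hereditarily Baire spaces --- $\mathbb{Q}$ is a continuous image of a countable discrete space). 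Until that lemma is supplied, the proposal proves only the implication "functionally countably fragmented $\Rightarrow$ extendable", which is Proposition~\ref{prop:ext_countable_frag_ness}, not Theorem~A.
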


The  simple example shows that the assumption that $E$ is  hereditarily   Baire cannot be omitted: if $A$ and $B$ are disjoint dense subsets of $E=\mathbb Q\cap [0,1]$ such that $E=A\cup B$ and $X=[0,1]$ or $X=\beta E$, then the characteristic function $f=\chi_A:E\to\mathbb R$ can not be extended to a Baire-one function on $X$. In connection with this the following question was formulated in \cite[Question 1]{Ka}.

\begin{question}
Let $X$ be a hereditarily Baire completely regular space and $f$ a
Baire-one function on $X$. Can $f$ be extended to a Baire-one function on $\beta X$?
\end{question}

We answer the question of Kalenda and Spurn\'{y} in negative. We introduce a notion of functionally countably fragmented map (see definitions  in Section~\ref{s22}) and prove that for a Baire-one function $f:X\to \mathbb R$ on a completely regular space $X$ the following conditions are equivalent:
(i) $f$ is functionally countably fragmented; (ii) $f$ can be extended to a Baire-one function on $\beta X$. In Section~\ref{s33} we give an example of a completely regular hereditarily Baire (even scattered) space $X$ and a Baire-one function $f:X\to [0,1]$ which is not functionally countably fragmented and consequently can not be extended to a Baire-one function on $\beta X$.

\section{Extension of countably fragmented functions}\label{s22}

 Let  $X$ be a topological space and $(Y,d)$ be a metric space. A map $f:X\to Y$ is called {\it $\varepsilon$-fragmented} for some $\varepsilon>0$  if for every closed nonempty set  $F\subseteq X$ there exists a nonempty relatively open set $U\subseteq F$ such that ${\rm diam}f(U)<\varepsilon$. If $f$ is $\varepsilon$-fragmented for every  $\varepsilon>0$, then it is called  {\it fragmented}.

Let $\mathscr U=(U_\xi:\xi\in[0,\alpha])$ be  a transfinite sequence of subsets of a topological space $X$. Following~\cite{HolSpu}, we define $\mathscr U$ to be {\it regular in $X$}, if
\begin{enumerate}[label=(\alph*)]
  \item each $U_\xi$ is open in $X$;

  \item $\emptyset=U_0\subset U_1\subset U_2\subset\dots\subset U_\alpha=X$;

  \item\label{it:c} $U_\gamma=\bigcup_{\xi<\gamma} U_\xi$ for every limit ordinal $\gamma\in[0,\alpha)$.
\end{enumerate}

\begin{proposition}\label{prop:char_frag}
  Let $X$ be a topological space, $(Y,d)$ be a metric space and $\varepsilon>0$. For a map $f:X\to Y$ the following conditions are equivalent:
  \begin{enumerate}
    \item\label{prop:char_frag:it:1}  $f$ is $\varepsilon$-fragmented;

    \item\label{prop:char_frag:it:2} there exists a regular sequence $\mathscr U=(U_\xi:\xi\in[0,\alpha])$ in $X$ such that ${\rm diam} f(U_{\xi+1}\setminus U_\xi)<\varepsilon$ for all $\xi\in[0,\alpha)$.
  \end{enumerate}
\end{proposition}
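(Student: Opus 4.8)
The plan is to prove the two implications separately; the substantial one is \ref{prop:char_frag:it:1}$\Rightarrow$\ref{prop:char_frag:it:2}, a transfinite construction, while \ref{prop:char_frag:it:2}$\Rightarrow$\ref{prop:char_frag:it:1} is an easy ``first index'' argument. For the latter, let $F\subseteq X$ be nonempty and closed and let $\xi$ be the least ordinal with $F\cap U_\xi\neq\emptyset$. By~(b) we have $\xi>0$, and by regularity $\xi$ cannot be a limit ordinal: otherwise $U_\xi$ would be the union of the $U_\eta$, $\eta<\xi$, so $F$ would meet some $U_\eta$ with $\eta<\xi$, contradicting minimality. Hence $\xi=\eta+1$, and since $F\cap U_\eta=\emptyset$, the set $U:=F\cap U_{\eta+1}$ is a nonempty relatively open subset of $F$ with $U\subseteq U_{\eta+1}\setminus U_\eta$; therefore ${\rm diam} f(U)\le{\rm diam} f(U_{\eta+1}\setminus U_\eta)<\varepsilon$.

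For \ref{prop:char_frag:it:1}$\Rightarrow$\ref{prop:char_frag:it:2} I would build $\mathscr U=(U_\xi)$ by transfinite recursion. Set $U_0=\emptyset$. At a limit stage $\gamma$ put $U_\gamma=\bigcup_{\xi<\gamma}U_\xi$, so that condition~(c) is automatic. At a successor stage, if $U_\xi\neq X$, then $F:=X\setminus U_\xi$ is a nonempty closed set, and $\varepsilon$-fragmentedness provides a nonempty relatively open $V\subseteq F$ with ${\rm diam} f(V)<\varepsilon$; by the definition of the subspace topology $V=W\setminus U_\xi$ for some open $W\subseteq X$, and I set $U_{\xi+1}=U_\xi\cup W$. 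Then $U_{\xi+1}$ is open, the identity $U_{\xi+1}\setminus U_\xi=V$ gives the required diameter bound, and $\emptyset\neq V\subseteq U_{\xi+1}\setminus U_\xi$ gives the strict inclusion $U_\xi\subsetneq U_{\xi+1}$. The recursion is carried on while $U_\xi\neq X$ and is stopped at the first index $\alpha$ with $U_\alpha=X$; this $\alpha$ is the top of the sequence, and $U_\alpha$ is the union of its predecessors whenever $\alpha$ happens to be a limit ordinal.

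The only delicate point, and a routine one, is that the recursion terminates. Because $U_\xi\subsetneq U_{\xi+1}$ at every successor step and $U_\eta\subsetneq U_{\eta+1}\subseteq U_\gamma$ for every $\eta<\gamma$ at a limit $\gamma$, the assignment $\xi\mapsto U_\xi$ is strictly $\subseteq$-increasing, hence injective; were the construction not to stop before the index reached $(2^{|X|})^+$, it would inject that cardinal into $\mathcal P(X)$, which is absurd. So it does stop, and it can stop only by reaching $X$. Verifying conditions (a)--(c) for the resulting sequence is then immediate from the construction, which completes the proof. I expect no genuine obstacle beyond organizing the recursion and invoking the standard cardinality bound on its length.
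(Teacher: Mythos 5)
Your proof is correct. The direction (\ref{prop:char_frag:it:2})$\Rightarrow$(\ref{prop:char_frag:it:1}) is exactly the paper's argument: take the least index $\beta$ at which a given closed $F$ meets $\mathscr U$, note that regularity forces $\beta$ to be a successor $\xi+1$, and observe that $F\cap U_{\xi+1}$ is a nonempty relatively open subset of $F$ contained in $U_{\xi+1}\setminus U_\xi$. For (\ref{prop:char_frag:it:1})$\Rightarrow$(\ref{prop:char_frag:it:2}) the paper gives no argument at all --- it simply cites Proposition 3.1 of an earlier paper of the authors --- so here you supply what the paper outsources. Your transfinite exhaustion is the standard construction and it is sound: the identity $U_{\xi+1}\setminus U_\xi=W\setminus U_\xi=V$ gives the diameter bound, strict growth at each successor step plus the cardinality bound on strictly increasing chains in $\mathcal P(X)$ forces termination, and you correctly note that if the process halts at a limit ordinal then condition (c) still holds at the top index. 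No gaps.
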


\begin{proof}
 (\ref{prop:char_frag:it:1})$\Rightarrow$(\ref{prop:char_frag:it:2}) is proved in \cite[Proposition 3.1]{Karlova:Mykhaylyuk:2016:EJMA}.

 (\ref{prop:char_frag:it:2})$\Rightarrow$(\ref{prop:char_frag:it:1}).  We fix a nonempty closed set $F\subseteq X$. Denote $\beta=\min\{\xi\in[0,\alpha]:F\cap U_\xi\ne\emptyset\}$. Property~\ref{it:c} implies that $\beta=\xi+1$ for some $\xi<\alpha$. Then the set $U=U_\beta\cap F$ is open in $F$ and ${\rm diam} f(U)\le{\rm diam}f(U_{\xi+1}\setminus U_\xi)<\varepsilon$.
\end{proof}

If a sequence $\mathscr U$ satisfies condition~(\ref{prop:char_frag:it:2}) of Proposition~\ref{prop:char_frag}, then it is called {\it $\varepsilon$-associated with $f$} and is denoted by $\mathscr U_\varepsilon(f)$.

We say that an $\varepsilon$-fragmented  map $f:X\to Y$ is {\it functionally $\varepsilon$-fragmented} if $\mathscr U_\varepsilon(f)$ can be chosen such that every set $U_\xi$ is functionally open in $X$. Further, $f$ is {\it functionally $\varepsilon$-countably fragmented} if $\mathscr U_\varepsilon(f)$ can be chosen to be countable and $f$ is {\it functionally countably fragmented} if $f$ is functionally $\varepsilon$-countably fragmented for all $\varepsilon>0$.

Evident connections between     kinds of fragmentability and its analogs are gathered in the following diagram.

\bigskip
\begin{center}
 \begin{tikzpicture}[thick,
             to/.style={->,>=stealth',shorten >=2pt,thick,font=\sffamily\footnotesize},
             toto/.style={<->,>=stealth',shorten >=2pt,thick,font=\sffamily\footnotesize},]

  \tikzstyle{format} = [rounded rectangle,
                      thick,
                      text centered,
                      draw=blue!50!black!50,
                      top color=white,
                      bottom color=blue!50!black!20,
                      drop shadow]

\node[format,anchor=center]  (FCF) {functional countable fragmentability};
\node[format,below=1.4cm  of FCF,anchor=center]  (F)  {fragmentability};
\node[format,right=3cm  of F, anchor=center] (CF) {countable fragmentability};
\node[format,left=3cm  of F, anchor=center]  (FF) {functional  fragmentability};
\node[format,below=1.4cm of F, anchor=center] (C) {continuity};
\node[format,left=2cm  of C, anchor=center]  (B1) {Baire-one};
\node[format,below=1.4cm  of CF, anchor=center]  (H1) {functional $F_\sigma$-measurability};

\path[->] (FCF) edge (FF);
\path[->] (FCF) edge (F);
\path[->] (FCF) edge (CF);
\path[->] (CF) edge (F);
\path[->] (FF) edge (F);
\path[->] (C) edge (F);
\path[->] (C) edge (B1);
\path[->] (C) edge (H1);
\end{tikzpicture}
\end{center}
\bigskip

Notice that none of the inverse implications is true.
\begin{remark}{\rm
  \begin{enumerate}[label=(\alph*)]
    \item If $X$ is hereditarily Baire, then every Baire-one map $f:X\to (Y,d)$ is barely continuous (i.e., for every nonempty closed set $F\subseteq X$ the restriction $f|_F$ has a point of continuity) and, hence, is fragmented (see \cite[31.X]{Kur:Top:Eng1}).

    \item If $X$ is a  paracompact space in which every closed set is $G_\delta$, then every fragmented map $f:X\to (Y,d)$ is Baire-one in the case either ${\rm dim} X=0$, or $Y$ is a metric contractible locally path-connected space~\cite{Karlova:Mykhaylyuk:Comp,Karlova:Mykhaylyuk:2016:EJMA}.

    \item Let $X=\mathbb R$ be endowed with the topology generated by the discrete metric $d(x,y)=1$ if $x\ne y$, and $d(x,y)=0$ if $x=y$. Then the identical map $f:X\to X$ is continuous, but is not countably fragmented.
  \end{enumerate}}
\end{remark}
For a deeper discussion of properties and applications of fragmented maps and their analogs we refer the reader to~\cite{AgCN,BanakhBokalo:weak,JOPV,Kum,Spu}.

\begin{proposition}\label{prop:ex_of_countablyfrag}
Let $X$ be a topological space, $(Y,d)$ be a metric space, $\varepsilon>0$ and $f:X\to Y$ be a map. If one of the following conditions hold
\begin{enumerate}
\item\label{it:prop:ex_of_countablyfrag:1} $Y$ is separable and $f$ is continuous,

\item\label{it:prop:ex_of_countablyfrag:2} $X$ is metrizable separable and $f$ is fragmented,

\item\label{it:prop:ex_of_countablyfrag:3} $X$ is compact and $f\in{\rm B}_1(X,Y)$,
\end{enumerate}
then $f$ is functionally countably fragmented.
\end{proposition}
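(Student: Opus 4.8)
The plan is to settle the three cases separately, reducing (3) to (2); in each case it is enough, for a fixed $\varepsilon>0$, to produce a countable regular sequence $\mathscr U_\varepsilon(f)$ all of whose terms are functionally open.

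\emph{Case (1).} Take a countable dense set $\{y_k:k\in\mathbb N\}$ in $Y$ and put $V_k=\{y\in Y:d(y,y_k)<\varepsilon/3\}$. The sets $V_k$ cover $Y$, have diameter $<\varepsilon$, and each is the cozero set of the continuous function $y\mapsto\max\{0,\varepsilon/3-d(y,y_k)\}$, so every $f^{-1}(V_k)$, and hence every finite union $U_n:=\bigcup_{k\le n}f^{-1}(V_k)$, is a cozero set of $X$. Together with $U_0=\emptyset$ these form (after dropping repetitions) a regular sequence of functionally open sets of length at most $\omega$ with $f(U_{n+1}\setminus U_n)\subseteq V_{n+1}$, so ${\rm diam}\,f(U_{n+1}\setminus U_n)<\varepsilon$.

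\emph{Case (2).} In a metrizable space every open set is already a cozero set, so only the countability of $\mathscr U_\varepsilon(f)$ has to be arranged. I would build $\mathscr U_\varepsilon(f)$ by transfinite recursion, as in the proof of the implication (1)$\Rightarrow$(2) in Proposition~\ref{prop:char_frag}: put $U_0=\emptyset$; while $U_\xi\ne X$, apply $\varepsilon$-fragmentedness to the nonempty closed set $X\setminus U_\xi$ to get an open $O_\xi\subseteq X$ meeting $X\setminus U_\xi$ with ${\rm diam}\,f(O_\xi\setminus U_\xi)<\varepsilon$, set $U_{\xi+1}=U_\xi\cup O_\xi$, and take unions at limit stages. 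This chain strictly increases as long as it runs, and a strictly increasing chain of open sets in a second-countable space is countable: sending each successor index $\xi$ to a basic open set lying in $U_{\xi+1}$ but not in $U_\xi$ gives an injection into a countable base. Hence the recursion reaches $X$ at a countable ordinal, which furnishes $\mathscr U_\varepsilon(f)$.

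\emph{Case (3).} Write $f=\lim_n f_n$ with continuous $f_n:X\to Y$. As $X$ is compact, each $f_n(X)$ is compact, hence separable, so $f$ and all the $f_n$ map into the separable closed subspace $\overline{\bigcup_n f_n(X)}$ of $Y$; we may therefore assume $Y$ is separable. Now let $F=(f_n)_n:X\to Y^{\mathbb N}$ be the (continuous) diagonal map and $K:=F(X)$. Being a compact subset of the separable metrizable space $Y^{\mathbb N}$, $K$ is compact metrizable. Since $\pi_n(F(x))=f_n(x)\to f(x)$, the pointwise limit $g:=\lim_n\pi_n|_K$ is a well-defined map $K\to Y$, Baire-one on $K$ (a pointwise limit of the continuous restrictions $\pi_n|_K$), with $f=g\circ F$. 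A compact metrizable space is hereditarily Baire, so by part~(a) of the Remark $g$ is barely continuous, hence fragmented, and by Case~(2) $g$ is functionally countably fragmented. Finally, I would check that this transfers along $F$: if $(V_\xi:\xi\le\alpha)$ is a countable regular sequence of cozero sets of $K$ that is $\varepsilon$-associated with $g$, then $(F^{-1}(V_\xi):\xi\le\alpha)$ is again a countable regular sequence of cozero sets of $X$ — preimages of cozero sets under continuous maps are cozero, $F^{-1}$ respects the unions in the definition of a regular sequence and satisfies $F^{-1}(K)=X$, and strict inclusions survive because each nonempty $V_{\xi+1}\setminus V_\xi$ lies in $K=F(X)$ — while $f(F^{-1}(V_{\xi+1})\setminus F^{-1}(V_\xi))=g(V_{\xi+1}\setminus V_\xi)$ has diameter $<\varepsilon$. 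Thus $f$ is functionally $\varepsilon$-countably fragmented.

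\emph{Where the difficulty lies.} The delicate point is to get, at the same time, a countable index set and levels that are functionally open rather than merely open. In (1) the codomain, and in (2) the domain, is metric enough for this to come for free, but on an arbitrary compact Hausdorff $X$ an open set need not be a cozero set, and the oscillation rank of a Baire-one function on $X$ can surpass every countable ordinal, so a direct oscillation-rank argument fails; factoring $f$ through the metrizable compactum $K=F(X)$ is precisely what lets Case~(3) lean on the already-handled Case~(2).
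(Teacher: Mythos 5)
Your proof is correct and follows essentially the same route as the paper: cases (1) and (2) are handled identically, and case (3) is reduced to case (2) by factoring $f$ through a metrizable compactum on which the Baire-one limit is fragmented. The only difference is that the paper obtains the factorization $f=g\circ\varphi$ by citing an external result (Proposition 7.1 of Karlova--Mykhaylyuk, Eur.\ J.\ Math.\ 2017), whereas you construct it explicitly via the diagonal map $F=(f_n)_n$ --- a self-contained variant of the same idea.
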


\begin{proof} Fix $\varepsilon>0$.

\textup{(\ref{it:prop:ex_of_countablyfrag:1})} Choose a  covering $(B_n:n\in\mathbb N)$ of $Y$ by open balls of diameters $<\varepsilon$. Let $U_0=\emptyset$, $U_n=f^{-1}(\bigcup_{k\leq n} B_k)$ for every $n\in\mathbb N$ and $U_{\omega_0}=\bigcup_{n=0}^\infty U_n$. Then the sequence $(U_\xi:\xi\in[0,\omega_0])$ is $\varepsilon$-associated with  $f$.

 \textup{(\ref{it:prop:ex_of_countablyfrag:2})} Notice that any strictly increasing well-ordered chain of open sets in $X$ is at most countable and every open set in $X$ is functionally open.

 \textup{(\ref{it:prop:ex_of_countablyfrag:3})}  By \cite[Proposition 7.1]{Karlova:Mykhaylyuk:2016:EJMA} there exist a metrizable compact space   $Z$, a continuous function  $\varphi:X\to Z$ and a function  $g\in{\rm B}_1(Z,Y)$ such that $f=g\circ \varphi$. Then $g$ is functionally $\varepsilon$-countably fragmented by condition (\ref{it:prop:ex_of_countablyfrag:2}) of the theorem. It is easy to see that $f$ is functionally $\varepsilon$-countably fragmented too.
\end{proof}

\begin{lemma}\label{lem:unif_conv_ext}
Let $X$ be a topological space,  $E\subseteq X$ and $f\in {\rm B}_1(E,\mathbb R)$. If there exists a sequence of functions $f_n\in {\rm B}_1(X,\mathbb R)$ such that $(f_n)_{n=1}^\infty$ converges uniformly to $f$ on $E$, then $f$ can be extended to a function $g\in {\rm B}_1(X,\mathbb R)$.
\end{lemma}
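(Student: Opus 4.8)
The plan is to replace $(f_n)$ by a telescoping series whose terms are Baire-one functions on all of $X$ but are \emph{globally} small in sup-norm, and then to identify the extension as a uniform limit of Baire-one functions.

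First, since $(f_n)$ converges to $f$ uniformly on $E$, I pass to a subsequence (and relabel) so that $\sup_{x\in E}|f_{n+1}(x)-f_n(x)|<2^{-n}$ for every $n$. Set $h_n=f_{n+1}-f_n$; being a difference of Baire-one functions, $h_n\in{\rm B}_1(X,\mathbb R)$, and $|h_n|\le 2^{-n}$ on $E$. Now truncate each $h_n$ by the continuous retraction $\tau_n(t)=\max\{-2^{-n},\min\{2^{-n},t\}\}$ of $\mathbb R$ onto $[-2^{-n},2^{-n}]$, putting $\tilde h_n=\tau_n\circ h_n$. If $h_n=\lim_m\psi_{n,m}$ pointwise with each $\psi_{n,m}$ continuous, then $\tilde h_n=\lim_m(\tau_n\circ\psi_{n,m})$ and each $\tau_n\circ\psi_{n,m}$ is continuous with values in $[-2^{-n},2^{-n}]$; hence $\tilde h_n\in{\rm B}_1(X,\mathbb R)$, $|\tilde h_n|\le 2^{-n}$ everywhere on $X$, and $\tilde h_n=h_n$ on $E$ since $|h_n|\le 2^{-n}$ there.

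Next I define $g=f_1+\sum_{n=1}^{\infty}\tilde h_n$. Because $|\tilde h_n|\le 2^{-n}$, the series converges uniformly on $X$, so $g$ is a well-defined real function and is the uniform limit of the partial sums $g_N=f_1+\sum_{n=1}^{N}\tilde h_n$, each of which lies in ${\rm B}_1(X,\mathbb R)$ as a finite sum of Baire-one functions. A uniform limit of Baire-one real functions is Baire-one, so $g\in{\rm B}_1(X,\mathbb R)$. Finally, on $E$ the truncations are inactive, whence $g|_E=f_1+\sum_{n=1}^{\infty}h_n=\lim_N f_N=f$ by the telescoping identity; this is the desired extension.

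The one genuinely external ingredient is the fact that a uniform limit of Baire-one functions is Baire-one, which is classical; should a self-contained proof be preferred, it can be slotted in at the last step using the $\psi_{n,m}$ above: take also continuous $\psi_{0,m}\to f_1$ and set $\Phi_m=\psi_{0,m}+\sum_{n=1}^{m}(\tau_n\circ\psi_{n,m})$, which is continuous; splitting $|\Phi_m(x)-g(x)|$ into a finite head, each term of which tends to $0$ as $m\to\infty$, plus tails dominated by $\sum_{n>K}2^{-n+1}$, shows $\Phi_m\to g$ pointwise on $X$. I expect the bookkeeping in that estimate, together with the routine checks that truncation disturbs neither the values on $E$ nor the pointwise limits, to be the only points requiring attention; the rest is straightforward.
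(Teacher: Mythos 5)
Your proof is correct and follows essentially the same route as the paper: both pass to telescoping differences, truncate them by the continuous retraction onto $[-2^{-n},2^{-n}]$ (the paper's $\max\{\min\{\cdot,2^{-n+1}\},-2^{-n+1}\}$ is exactly your $\tau_n$) so that the truncated terms agree with the originals on $E$ but are globally small, and then sum the resulting uniformly convergent series of Baire-one functions. Your closing sketch of why the uniform limit is Baire-one is sound and merely makes explicit a step the paper leaves as a standard fact.
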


\begin{proof} Without loss of generality we may assume that $f_0(x)=0$ for all $x\in E$ and
$$
|f_{n}(x)-f_{n-1}(x)|\le \frac{1}{2^{n-1}}
$$
for all $n\in\mathbb N$ and $x\in E$. Now we put
$$
g_n(x)=\max\{\min\{(f_{n}(x)-f_{n-1}(x)),2^{-n+1}\},-2^{-n+1}\}
$$
and notice that  $g_n\in{\rm B}_1(X,\mathbb R)$. Moreover,  the series  $\sum_{n=1}^\infty g_n(x)$ is uniformly convergent on $X$ for a function  $g\in{\rm B}_1(X,\mathbb R)$. Then
$g$ is the required extension of $f$.
\end{proof}

Recall that a subspace $E$ of a topological space $X$ is   {\it $z$-embedded in $X$} if for any zero set $F$ in $E$ there exists a zero set $H$ in $X$ such that $H\cap E=F$;  {\it $C^*$-embedded in $X$} if any bounded continuous function $f$ on $E$ can be extended to a continuous function on $X$.

\begin{proposition}\label{prop:ext_countable_frag_ness}
  Let $E$ be a $z$-embedded subspace of a completely regular space $X$ and $f:E\to \mathbb R$ be a functionally countably fragmented  function. Then  $f$ can be extended to a functionally countably fragmented function $g\in {\rm B}_1(X,\mathbb R)$.
  \end{proposition}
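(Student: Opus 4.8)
The plan is to reduce the assertion to Lemma~\ref{lem:unif_conv_ext} by manufacturing, for every $n\in\mathbb N$, an ``extendable building block'' for $f$. Call a real-valued function \emph{functionally simple} if it is constant on every layer of some countable regular sequence of cozero sets. Such a function is functionally countably fragmented (that very sequence is $\varepsilon$-associated with it for all $\varepsilon>0$, since its layers are mapped to singletons); the class of functionally simple functions is closed under finite sums and truncations (pass to the lexicographic common refinement of the relevant sequences, again a countable regular sequence of cozero sets); and functional countable fragmentability passes to uniform limits (if $\|h-h'\|_\infty<\eta$ and a regular sequence is $\delta$-associated with $h'$, it is $(\delta+2\eta)$-associated with $h$). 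So it will suffice to find functionally simple functions $g_n\in{\rm B}_1(X,\mathbb R)$ with $|g_n-f|<1/n$ on $E$: then $g_n|_E\to f$ uniformly, whence $f\in{\rm B}_1(E,\mathbb R)$, and the telescoping argument from the proof of Lemma~\ref{lem:unif_conv_ext} applied to $(g_n)$ delivers an extension $g\in{\rm B}_1(X,\mathbb R)$ of $f$ which is a uniform limit of functionally simple functions, hence — by the three remarks above — functionally countably fragmented.

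To build $g_n$, fix a countable regular sequence $\mathscr U=(U_\xi:\xi\in[0,\alpha])$ of functionally open subsets of $E$ that is $(1/n)$-associated with $f$ and for which the limit condition $U_\gamma=\bigcup_{\xi<\gamma}U_\xi$ holds also at $\gamma=\alpha$ (arrange this if $\alpha$ is a limit). Let $Z_\xi=E\setminus U_\xi$; these are zero sets in $E$, decreasing, with $Z_0=E$, $Z_\alpha=\emptyset$ and $Z_\gamma=\bigcap_{\xi<\gamma}Z_\xi$ at limits. Using that $E$ is $z$-embedded, fix zero sets $\widetilde Z_\xi$ in $X$ with $\widetilde Z_\xi\cap E=Z_\xi$, and then define by transfinite recursion $\widehat Z_0=X$, $\widehat Z_{\xi+1}=\widehat Z_\xi\cap\widetilde Z_{\xi+1}$ and $\widehat Z_\gamma=\bigcap_{\xi<\gamma}\widehat Z_\xi$ at limits $\gamma$. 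A routine induction shows that every $\widehat Z_\xi$ is a zero set in $X$ (at a limit $\gamma$ the intersection collapses, by monotonicity, to a countable one along an $\omega$-cofinal sequence in $\gamma$, and a countable intersection of zero sets is a zero set), that $\widehat Z_\xi\cap E=Z_\xi$, and that the $\widehat Z_\xi$ decrease. Setting $V_\xi=X\setminus\widehat Z_\xi$ for $\xi\le\alpha$ and $V_{\alpha+1}=X$ then yields a countable regular sequence $\mathscr V=(V_\xi:\xi\in[0,\alpha+1])$ of cozero sets in $X$ with $V_\xi\cap E=U_\xi$ for $\xi\le\alpha$.

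Now for each $\xi<\alpha$ with $U_{\xi+1}\setminus U_\xi\ne\emptyset$ fix a point $x_\xi\in U_{\xi+1}\setminus U_\xi$ and set $c_\xi=f(x_\xi)$, and put $c_\xi=0$ for the remaining $\xi\le\alpha$. Since $\mathscr V$ is regular its layers partition $X$, so a function $g_n\colon X\to\mathbb R$ is well defined by $g_n\equiv c_\xi$ on $V_{\xi+1}\setminus V_\xi$, and it is functionally simple by construction. If $x\in E$ then $x\in V_\alpha$, so $x$ lies in a layer with $\xi<\alpha$, and there $x\in U_{\xi+1}\setminus U_\xi$, whence $|g_n(x)-f(x)|=|f(x_\xi)-f(x)|\le{\rm diam}\,f(U_{\xi+1}\setminus U_\xi)<1/n$. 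Finally $g_n\in{\rm B}_1(X,\mathbb R)$: for open $O\subseteq\mathbb R$ the preimage $g_n^{-1}(O)$ is a countable union of layers $V_{\xi+1}\setminus V_\xi=V_{\xi+1}\cap(X\setminus V_\xi)$, and each such layer — an intersection of a cozero set with a zero set — is a countable union of zero sets; hence $g_n^{-1}(O)$ is a countable union of zero sets, so $g_n\in{\rm K}_1(X,\mathbb R)={\rm B}_1(X,\mathbb R)$, the equality holding because $\mathbb R$ is metrizable, separable, connected and locally path-connected.

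The step I expect to be the main obstacle — and the only place where the countability hypothesis is genuinely needed — is the second paragraph: $z$-embeddedness lets one extend the cozero sets $U_\xi$ one at a time, but one must then rebuild the monotonicity and, crucially, the limit conditions of the whole transfinite sequence, and this succeeds only because zero sets are closed under \emph{countable} intersections and every limit ordinal below $\omega_1$ has cofinality $\omega$. Once this extension is in hand, the construction of $g_n$, the verification of its properties, and the final telescoping are routine.
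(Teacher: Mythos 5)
Your proof is correct and its core is the same as the paper's: for each $n$ you extend the countable $\tfrac1n$-associated sequence of cozero sets from $E$ to a countable regular sequence of cozero sets in $X$ using $z$-embeddedness (your transfinite repair of monotonicity and of the limit conditions via countable intersections of zero sets is exactly the point the paper compresses into ``one can choose''), you build the layerwise-constant approximants $g_n$, check $|g_n-f|<1/n$ on $E$ and $g_n\in{\rm K}_1(X,\mathbb R)={\rm B}_1(X,\mathbb R)$, and finish with the telescoping Lemma~\ref{lem:unif_conv_ext}. Where you genuinely diverge is the last claim, that the resulting extension $g$ is itself functionally countably fragmented. The paper handles this by first reducing the whole proposition to compact $X$ (passing to $\beta X$, where $E$ is still $z$-embedded) precisely so that at the end it can quote Proposition~\ref{prop:ex_of_countablyfrag}(3) ($X$ compact and $g\in{\rm B}_1$ imply functional countable fragmentability). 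You instead prove it directly: the partial sums of the telescoping series are finite sums of truncations of your ``functionally simple'' functions, hence functionally simple, and functional countable fragmentability is stable under uniform limits (a $\delta$-associated sequence for $h'$ is $(\delta+2\eta)$-associated for any $h$ with $\|h-h'\|_\infty<\eta$). This buys you a self-contained argument that never leaves $X$ and does not need the factorization through a metrizable compactum hidden in Proposition~\ref{prop:ex_of_countablyfrag}(3); the paper's route is shorter on the page but imports more machinery. One shared wrinkle, which you at least flag and the paper does not: if the length $\alpha$ of the associated sequence is a limit ordinal, the definition of regularity does not force $U_\alpha=\bigcup_{\xi<\alpha}U_\xi$, and both proofs implicitly assume there is no residual layer $E\setminus\bigcup_{\xi<\alpha}U_\xi$; your parenthetical ``arrange this'' deserves a sentence (e.g.\ that such a sequence can always be taken with $\alpha$ a successor), but this does not distinguish your argument from the paper's.
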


  \begin{proof} Let us observe that we may assume the space $X$ to be compact. Indeed, $E$ is $z$-embedded in $\beta X$, since $X$ is $C^*$-embedded in $\beta X$~\cite[Theorem 3.6.1]{Eng-eng}, and if we can extend $f$ to a  functionally countably fragmented function $h\in {\rm B}_1(\beta X,\mathbb R)$, then the restriction $g=h|_E$ is a  functionally  countably fragmented extension of $f$ on $X$ and $g\in {\rm B}_1(X,\mathbb R)$.

  Fix $n\in\mathbb N$ and consider  $\frac 1n$-associated with $f$ sequence $\mathscr U=(U_\xi:\xi\leq\alpha)$. Without loss of the generality we can assume that all sets $U_{\xi+1}\setminus U_\xi$ are nonempty.
   Since $E$ is $z$-embedded in $X$, one can choose a countable family $\mathscr V=(V_\xi:\xi\leq\alpha)$ of functionally open sets in $X$ such that $V_{\xi}\subseteq V_{\eta}$ for all $\xi\le\eta\leq\alpha$, $V_{\xi}\cap E=U_{\xi}$ for every $\xi\leq\alpha$ and $V_\eta=\bigcup_{\xi<\eta}V_\xi$ for every limit ordinal $\eta\leq \alpha$. For every $\xi\in[0,\alpha)$ we take an arbitrary point $y_\xi\in f(U_{\xi+1}\setminus U_\xi)$. Now for every $x\in X$ we put
  $$
  f_n(x)=\left\{\begin{array}{ll}
              y_\xi, &  x\in V_{\xi+1}\setminus V_\xi,\\
              y_0,   & x\in X\setminus V_\alpha.
                \end{array}\right.
   $$
    Observe that $f_n:X\to \mathbb R$ is functionally $F_\sigma$-measurable, since the preimage $f^{-1}(W)$ of any open set $W\subseteq \mathbb R$ is an at most countable union of functionally $F_\sigma$-sets from the system $\{V_{\xi+1}\setminus V_\xi:\xi\in[0,\alpha)\}\cup\{X\setminus V_\alpha\}$. Therefore, $f_n\in {\rm B}_1(X,\mathbb R)$.

  It is easy to see that the sequence $(f_n)_{n=1}^\infty$ is uniformly convergent to $f$ on $E$. Now it follows from Lemma~\ref{lem:unif_conv_ext} that $f$ can be extended to a function $g\in {\rm B}_1(X,\mathbb R)$. According to Proposition \ref{it:prop:ex_of_countablyfrag:2} (3), $g$ is functionally countably fragmented.
  \end{proof}

\begin{corollary}
  Every functionally countably fragmented function $f:X\to\mathbb R$ defined  on a topological space $X$ belongs to the first Baire class.
\end{corollary}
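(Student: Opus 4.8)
The plan is to deduce this corollary directly from Proposition~\ref{prop:ext_countable_frag_ness} by taking the ambient space to be $X$ itself. First I would observe that every topological space $X$ is trivially $z$-embedded in itself: for any zero set $F$ in $X$ we may simply take $H=F$. However, Proposition~\ref{prop:ext_countable_frag_ness} requires the ambient space to be completely regular, so the argument as stated applies verbatim only when $X$ is completely regular; for a general topological space one should instead note that the real conclusion we need is already contained in the proof of that proposition rather than in its hypotheses.

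A cleaner route, which I would actually carry out, is to inspect the construction inside the proof of Proposition~\ref{prop:ext_countable_frag_ness}: given a functionally countably fragmented $f:X\to\mathbb R$, fix $n\in\mathbb N$, take a $\tfrac1n$-associated countable sequence $\mathscr U=(U_\xi:\xi\le\alpha)$ of functionally open subsets of $X$ (these exist by definition of functional countable fragmentability, with no embedding needed since we are already working in $X$), pick points $y_\xi\in f(U_{\xi+1}\setminus U_\xi)$, and define the simple function
$$
f_n(x)=\begin{cases} y_\xi, & x\in U_{\xi+1}\setminus U_\xi,\\ y_0, & x\in X\setminus U_\alpha.\end{cases}
$$
Exactly as in that proof, the preimage $f_n^{-1}(W)$ of any open $W\subseteq\mathbb R$ is a countable union of functionally $F_\sigma$-sets drawn from $\{U_{\xi+1}\setminus U_\xi:\xi<\alpha\}\cup\{X\setminus U_\alpha\}$, so $f_n\in{\rm K}_1(X,\mathbb R)\subseteq{\rm B}_1(X,\mathbb R)$. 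Since $\|f_n-f\|_\infty\le\tfrac1n$ on all of $X$, the sequence $(f_n)$ converges uniformly to $f$, and a uniform limit of Baire-one functions is Baire-one (this is the $E=X$ case of Lemma~\ref{lem:unif_conv_ext}, or a direct telescoping argument). Hence $f\in{\rm B}_1(X,\mathbb R)$.

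I do not expect any real obstacle here: the whole content is already present in the proof of Proposition~\ref{prop:ext_countable_frag_ness}, and the corollary is essentially the degenerate case $E=X$ of it. The one point worth a sentence of care is that $z$-embedding of $X$ in itself is immediate and that complete regularity is not actually used once $E=X$, so the argument goes through for an arbitrary topological space $X$; alternatively one simply records that the first half of the proof of Proposition~\ref{prop:ext_countable_frag_ness} never invoked complete regularity and produced the uniformly approximating sequence $(f_n)$ directly.
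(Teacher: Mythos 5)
Your proof is correct, but it takes a genuinely different route from the paper's. The paper does not specialize Proposition~\ref{prop:ext_countable_frag_ness} to $E=X$ (it cannot do so literally, since that proposition assumes the ambient space is completely regular, while the corollary concerns an arbitrary topological space); instead it uses the countably many continuous functions $\varphi_{n,\xi}$ witnessing functional openness of the sets $U_{n,\xi}$ to build a continuous map $\pi:X\to[0,1]^{\Phi}$ into a separable metrizable space, checks that $f$ factors as $f=g\circ\pi$ with $g$ functionally countably fragmented on $\pi(X)$, applies the proposition to $\pi(X)$ (completely regular and trivially $z$-embedded in itself), and concludes by composing with $\pi$. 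You instead rerun the approximation machinery from the proof of the proposition directly on $X$: the simple functions $f_n$ built from a countable $\frac1n$-associated sequence of functionally open sets are functionally $F_\sigma$-measurable (each $U_{\xi+1}\setminus U_\xi$ is a countable union of zero sets, being the intersection of the functionally $F_\sigma$ set $U_{\xi+1}$ with the zero set $X\setminus U_\xi$), hence Baire-one by the equality ${\rm K}_1(X,\mathbb R)={\rm B}_1(X,\mathbb R)$ quoted in the introduction, and they converge uniformly to $f$. This is shorter and makes explicit that neither complete regularity nor $z$-embedding is needed once $E=X$, whereas the paper's factorization through a separable metrizable quotient is less direct but is a reusable device (the same trick appears in the proof of Proposition~\ref{prop:ex_of_countablyfrag}(3)). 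One small point: invoking Lemma~\ref{lem:unif_conv_ext} verbatim with $E=X$ would be circular, since its stated hypotheses include $f\in{\rm B}_1(E,\mathbb R)$; but as you note, its proof never uses that hypothesis, and the telescoping-series argument yields the uniform-limit fact directly, so your argument stands.
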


\begin{proof}
For every $n\in\mathbb N$ we choose a $\frac 1n$-associated with $f$ family $\mathscr U_n=(U_{n,\xi}:\xi\leq\alpha_n)$ of functionally open sets $U_{n,\xi}$ and corresponding family $(\varphi_{n,\xi}:\xi\leq\alpha_n)$ of continuous functions $\varphi_{n,\xi}:X\to[0,1]$ such that $U_{n,\xi}=\varphi^{-1}_{n,\xi}((0,1])$. We consider the at most countable set $\Phi=\bigcup_{n=1}^\infty\{\varphi_{n,\xi}:0\leq \xi\leq \alpha_n\}$ and the continuous mapping $\pi:X\to[0,1]^\Phi$, $\pi(x)=(\varphi(x))_{\phi\in \Phi}$.

Show that $f(x)=f(y)$ for every $x,y\in X$ with $\pi(x)=\pi(y)$. Let $x,y\in X$ with $\pi(x)=\pi(y)$. For every $n\in\mathbb N$ we choose $\xi_n\leq \alpha_n$ such that $x\in U_{n,\xi_n+1}\setminus U_{n,\xi_n}$. Then $y\in U_{n,\xi_n+1}\setminus U_{n,\xi_n}$ and
$$
|f(x)-f(y)|\leq {\rm diam} (U_{n,\xi_n+1}\setminus U_{n,\xi_n})\leq \tfrac1n
$$
for every $n\in\mathbb N$. Thus, $f(x)=f(y)$.

Now we consider the function $g:\pi(X)\to\mathbb R$, $g(\pi(x))=f(x)$. Clearly, that every set $\pi(U_{n,\xi})$ is open in the metrizable space $\pi(X)$. Therefore, for every $n\in\mathbb N$ the family $(\pi(U_{n,\xi}):\xi\leq\alpha_n)$ is  $\frac 1n$-associated with $g$. Thus, $g$ is functionally countably fragmented. According to Proposition~\ref{prop:ext_countable_frag_ness}, $g\in {\rm B}_1(\pi(X),\mathbb R)$. Therefore, $f\in {\rm B}_1(X,\mathbb R)$.
\end{proof}

Combining Propositions~\ref{prop:ex_of_countablyfrag} and \ref{prop:ext_countable_frag_ness}  we obtain the following result.

\begin{theorem}\label{thm:main}
 Let  $X$ be a completely regular space. For a Baire-one function $f:X\to \mathbb R$ the following conditions are equivalent:
  \begin{enumerate}
    \item $f$ is functionally countably fragmented;

    \item $f$ can be extended to a Baire-one function on $\beta X$.
  \end{enumerate}
\end{theorem}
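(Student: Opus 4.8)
The plan is to deduce both implications from Propositions~\ref{prop:ex_of_countablyfrag} and~\ref{prop:ext_countable_frag_ness}, the only additional ingredient being the (essentially bookkeeping) fact that functional countable fragmentability passes to subspaces. So almost no genuinely new argument is needed; the substantive work has already been done in those two propositions.

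For the implication (1)$\Rightarrow$(2) I would apply Proposition~\ref{prop:ext_countable_frag_ness} with the subspace $E:=X$ sitting inside the ambient space $\beta X$. The space $\beta X$ is compact Hausdorff, hence completely regular, and $X$ is $z$-embedded in $\beta X$: indeed $X$ is $C^*$-embedded in $\beta X$ by \cite[Theorem 3.6.1]{Eng-eng}, and if $Z=h^{-1}(0)$ is a zero set in $X$ with $h$ bounded and continuous, then the continuous extension $\widetilde h\colon\beta X\to\mathbb R$ of $h$ satisfies $\widetilde h^{-1}(0)\cap X=Z$, so $Z$ is the trace of a zero set of $\beta X$. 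Since $f$ is functionally countably fragmented by hypothesis, Proposition~\ref{prop:ext_countable_frag_ness} produces a functionally countably fragmented function $g\in{\rm B}_1(\beta X,\mathbb R)$ with $g|_X=f$, which is precisely~(2).

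For the implication (2)$\Rightarrow$(1) let $g\in{\rm B}_1(\beta X,\mathbb R)$ be an extension of $f$. Since $\beta X$ is compact, Proposition~\ref{prop:ex_of_countablyfrag}\,(3) shows that $g$ is functionally countably fragmented, and it remains to check that $f=g|_X$ inherits this property. Given $\varepsilon>0$, pick a countable sequence $\mathscr U_\varepsilon(g)=(U_\xi:\xi\le\alpha)$ of functionally open subsets of $\beta X$ that is $\varepsilon$-associated with $g$, and pass to $(U_\xi\cap X:\xi\le\alpha)$. Each $U_\xi\cap X$ is functionally open in $X$ (the trace of a cozero set is a cozero set), the chain is still increasing from $\emptyset$ to $X$, initial-segment unions are still respected at limit ordinals, and ${\rm diam}\,f\bigl((U_{\xi+1}\cap X)\setminus(U_\xi\cap X)\bigr)\le{\rm diam}\,g(U_{\xi+1}\setminus U_\xi)<\varepsilon$; discarding repeated terms to recover strict inclusions, this is an $\varepsilon$-associated countable family for $f$. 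Hence $f$ is functionally countably fragmented.

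I do not expect a serious obstacle: the only point requiring a (routine) verification is the stability of functional countable fragmentability under passage to a subspace, and this rests just on the two elementary observations that the trace of a cozero set on a subspace is again a cozero set and that diameters of images can only shrink upon restriction. Everything else is a direct citation of the earlier propositions together with the standard fact that $X$ is $z$-embedded in $\beta X$.
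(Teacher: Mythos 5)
Your proposal is correct and follows exactly the paper's intended route: (1)$\Rightarrow$(2) is Proposition~\ref{prop:ext_countable_frag_ness} applied to $E=X$ sitting $z$-embedded in $\beta X$, and (2)$\Rightarrow$(1) is Proposition~\ref{prop:ex_of_countablyfrag}\,(3) on the compact space $\beta X$ followed by the routine restriction of the associated families to $X$. The paper states the theorem as an immediate combination of these two propositions; you have merely written out the bookkeeping it leaves implicit.
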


\section{A Baire-one bounded function which is not countably fragmented}\label{s33}

\begin{theorem}
  There exists a completely regular scattered (and hence hereditarily Baire) space $X$ and a Baire-one function $f:X\to [0,1]$ which can not be extended to a Baire-one function on $\beta X$.
\end{theorem}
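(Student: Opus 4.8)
\begin{pf}[Plan of proof]
The strategy is to use Theorem~\ref{thm:main}: it suffices to exhibit a completely regular scattered space $X$ together with a function $f\in {\rm B}_1(X,[0,1])$ which is \emph{not} functionally countably fragmented, since $X$ scattered is automatically hereditarily Baire (in any scattered space the isolated points are dense in every subspace, so every dense $G_\delta$ is dense, and this is inherited by closed subspaces). To disprove functional countable fragmentability I plan to use the factorization device extracted from the proof of the Corollary above: if $f$ were functionally countably fragmented, one could pick, for every $n$, continuous functions realizing the cozero sets of a $\frac1n$-associated sequence, bundle them into one countable family $\Phi\subseteq C(X,[0,1])$, and conclude that $f=g\circ\pi_\Phi$ for the continuous map $\pi_\Phi\colon X\to[0,1]^\Phi$; in particular $f$ would be constant on every fibre of $\pi_\Phi$. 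Hence it is enough to construct $(X,f)$ so that $f\in{\rm B}_1(X,[0,1])$ and, for \emph{every} countable $\Phi\subseteq C(X,\mathbb R)$, there are points $x\neq y$ with $\varphi(x)=\varphi(y)$ for all $\varphi\in\Phi$ but $f(x)\neq f(y)$.

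For the space I would take a scattered space of small Cantor--Bendixson rank of ``$\Psi$-space type'': a countable dense set $D$ of isolated points and an uncountable closed discrete ``limit level'' $L$, each $p\in L$ being the unique limit of a fixed countable, almost disjoint ``trace'' $T_p\subseteq D$ (so that, as in a Mrówka space, a function is continuous at $p$ exactly when it is eventually constant along $T_p$); complete regularity is obtained either by realizing $X$ inside a compact scattered space or by a direct verification. The function $f$ would be supported on $L$ (equal to $0$ on $D$), with values on $L$ chosen so that $f$ ``splits'' $L$ into uncountably many pieces no countable family of continuous functions can keep track of. The Baire-one property of $f$ is proved by an explicit pointwise approximation: define continuous $g_n$ so that $g_n$ coincides with the target value of $f$ along the tail of each trace $T_p$ (this is exactly what forces continuity of $g_n$ at $p$ and is the mechanism used both in Proposition~\ref{prop:ex_of_countablyfrag} and in the standard computations for Mrówka-type spaces), and check $g_n\to f$ pointwise. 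The delicate part of the construction is to arrange the family $\{T_p:p\in L\}$ and the values of $f$ on $L$ so that, on the one hand, $f$ is a pointwise limit of continuous functions, while on the other hand every $\varphi\in C(X,\mathbb R)$ restricted to $L$ ``cannot separate more than countably much'': the traces on $L$ of the cozero sets of $X$ should lie, modulo countable sets, in a fixed countably generated subalgebra of $\mathcal P(L)$, whereas $f$ genuinely distinguishes an uncountable transversal that no such subalgebra follows.

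The verification then proceeds as follows. Given a countable $\Phi\subseteq C(X,\mathbb R)$, the sets $\{p\in L:\varphi(p)\in J\}$ with $\varphi\in\Phi$ and $J$ a rational interval generate, modulo countable sets, only a countable family of subsets of $L$; a counting/pressing-down argument on $\omega_1$ produces two points $x,y\in L$ lying in the same member of each of these sets, hence (by the rigidity property built into the traces in the previous paragraph) with $\varphi(x)=\varphi(y)$ for all $\varphi\in\Phi$, while $f(x)\neq f(y)$ by construction. This contradicts the factorization $f=g\circ\pi_\Phi$, so $f$ is not functionally countably fragmented, and Theorem~\ref{thm:main} gives that $f$ has no Baire-one extension to $\beta X$. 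I expect the main obstacle to be precisely the tension described in the second paragraph: pointwise limits of continuous functions tend to inherit the rigidity of the continuous functions --- on the one-point Lindel\"ofication of $\omega_1$, for instance, every Baire-one function is already constant modulo a countable set, which kills that candidate; the deleted Tychonoff plank is ``too Lindel\"of'' and every Baire-one function there does extend; and in a plain Mrówka space the relevant level set is already a zero set, so the obvious characteristic functions are even functionally \emph{finitely} fragmented. Thus the space has to be engineered to be locally flexible enough (on each $D\cup\{p\}$) for $f$ to remain Baire-one, yet globally rigid enough on $L$ for the cozero-trace lemma to hold; producing such an $X$ and proving that lemma is the crux of the argument.
\end{pf}
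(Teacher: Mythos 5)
Your submission is a plan rather than a proof: the entire mathematical content of this theorem is the construction of the pair $(X,f)$ together with the verification of its properties, and you explicitly defer exactly that (``producing such an $X$ and proving that lemma is the crux of the argument''). The reduction you set up -- scattered $\Rightarrow$ hereditarily Baire, then use Theorem~\ref{thm:main} and the factorization $f=g\circ\pi_\Phi$ through a countable family $\Phi$ of continuous functions -- is a correct and reasonable frame, but the step where you must exhibit a Mr\'owka-type space in which (a) $f$ remains Baire-one and (b) no countable family of cozero sets can separate the relevant uncountable pieces of the top level $L$ is precisely where all the difficulty lives, and you candidly list the reasons the obvious candidates fail (in a plain $\Psi$-space the level $L$ is a zero set, so characteristic functions of subsets of $L$ tend to be functionally \emph{finitely} fragmented via $U_1=D\cup(L\setminus S)$, $U_2=X$; the one-point Lindel\"ofication and the Tychonoff plank fail for other reasons). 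The tension you identify is real: the continuous approximants $g_n$ witnessing $f\in{\rm B}_1$ are themselves a countable family of continuous functions, and in a rank-two scattered space they come dangerously close to supplying the cozero sets that would make $f$ functionally countably fragmented. Without a concrete almost disjoint family (Luzin-type or otherwise) and a proof of both (a) and (b) for it, the argument cannot be assessed as correct.

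For comparison, the paper avoids this tension by abandoning the two-level picture entirely. It builds a scattered space of Cantor--Bendixson height $\omega_1$, stratified as $X=\bigsqcup_{\alpha<\omega_1}X_\alpha$ with each level $X_\alpha$ dense in $\bigcup_{\xi\ge\alpha}X_\xi$, and takes $f$ to be the characteristic function of the union of the even levels; $f$ is Baire-one because it factors continuously onto $\mathbb Q$ followed by a Baire-one function on $\mathbb Q$. Non-(countable) fragmentability is then a short induction: any regular sequence $(U_\xi)_{\xi\le\alpha}$ that is $\tfrac12$-associated with $f$ must satisfy $U_\beta\subseteq\bigcup_{\xi<\beta}X_\xi$ (a difference $U_{\delta+1}\setminus U_\delta$ meeting two consecutive levels would have $f$-diameter $1$), so a countable such sequence can never exhaust $X$. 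Note that this argument uses only the countability and regularity of the sequence, not functional openness, so the paper's $f$ fails even plain countable fragmentability -- a stronger conclusion than your plan aims for, obtained without any counting or pressing-down argument on an uncountable discrete level.
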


\begin{proof}
\medskip
\underline{\tt Claim 1. Construction of $X$.} Let $\mathbb Q=\{r_n:n\in\mathbb N\}$, $r_n\ne r_m$ for all distinct $n,m\in\mathbb N$ and
\begin{gather*}
\overline{\{r_{2n-1}:n\in\mathbb N\}}=\overline{\{r_{2n}:n\in\mathbb N\}}=\mathbb Q,\\
A=\bigcup_{n=1}^\infty \{r_{2n-1}\}\times [0,1],\quad B=\bigcup_{n=1}^\infty \{r_{2n}\}\times [0,1].
 \end{gather*}
 We consider partitions $\mathscr A=(A_t:t\in[0,1])$ and $\mathscr B=(B_t:t\in[0,1])$ of the sets $A$ and $B$ into everywhere dense  sets $A_t$ and $B_t$, respectively, such that $|A_t|=|B_t|=\mathfrak c$. Moreover, let $[0,1]=\bigsqcup_{\alpha<\omega_1}T_\alpha$ with $|T_\alpha|=\mathfrak c$ for every $\alpha<\omega_1$. For every $\alpha\in[0,\omega_1)$ we put
 $$
Q_\alpha=\left\{\begin{array}{ll}
              \bigsqcup_{t\in T_\alpha}  A_t, & \alpha\,\,\mbox{is even},\\
              \bigsqcup_{t\in T_\alpha}  B_t, & \alpha\,\,\mbox{is odd},
            \end{array}
 \right.
 $$
$$
Q=\bigsqcup_{\alpha<\omega_1}Q_\alpha, \quad X_\alpha=Q_\alpha\times\{\alpha\} \quad\mbox{and}\quad X=\bigsqcup_{\alpha<\omega_1}X_\alpha.
$$

\medskip
\underline{\tt Claim 2. Indexing of $X$.} For every $\alpha\in[0,\omega_1)$ we consider the set
$$
I_\alpha=\{(i_\xi)_{\xi\in[\alpha,\omega_1)}: |\{\xi:i_\xi\ne 0\}|\le\aleph_0\}\subseteq [0,1]^{[\alpha,\omega_1)}
$$
and notice that $|I_\alpha|=\mathfrak c$. Let $\varphi_\alpha:I_\alpha\to T_\alpha$ be a bijection and
$$
X_\alpha=\bigsqcup_{j\in I_{\alpha+1}}X_{\alpha,j},
$$
where
$$
X_{\alpha,j}=\left\{\begin{array}{ll}
              A_{\varphi_{\alpha+1}(j)}\times \{\alpha\}, & \alpha\,\,\mbox{is even},\\
              B_{\varphi_{\alpha+1}(j)}\times \{\alpha\}, & \alpha\,\,\mbox{is odd},
            \end{array}
 \right.
 $$

For all $\xi, \eta\in [0,\omega_1)$ with $\eta>\xi$ and $i\in I_\eta$ we put
\begin{gather*}
J_{\eta,\xi}^{i}=\{j\in I_\xi: j|_{[\eta,\omega_1)}=i\}.
\end{gather*}
In particular, if $\xi=\alpha$, $\eta=\alpha+1$ and $i\in I_{\alpha+1}$, then we denote the set $J_{\alpha+1,\alpha}^i$ simply by $J_{\alpha}^i$.
Notice that $|J_{\alpha}^i|=\mathfrak c$ and we may assume that
$$
X_{\alpha,i}=\{x_j:j\in J_\alpha^i\}.
$$
Then
$$
X_{\alpha}=\{x_i:i\in I_\alpha\},
$$
since $I_\alpha=\bigsqcup_{i\in I_{\alpha+1}}J_\alpha^i$.

\medskip
\underline{\tt Claim 3. Topologization of $X$.}
For all $\alpha\in[1,\omega_1)$, $i\in I_\alpha$ and $x=x_i\in X_\alpha$  we put
\begin{gather*}
L_{<x}=\bigsqcup_{\xi<\alpha} \{x_j\in X_\xi: j\in J_{\alpha,\xi}^i\},\quad L_{\le x}=\{x\}\cup L_{<x}.
\end{gather*}
Notice that for all $x\in X_\alpha$ and $y\in X_\beta$ with $\alpha\leq \beta$ either $L_{\le x}\subseteq L_{\le y}$, or $L_{\le x}\cap L_{\le y}=\emptyset$.

Now we are ready to define a topology $\tau$ on $X$. Each point of $X_0$ is isolated. For any $\alpha\in[1,\omega_1)$ and a point $x\in X_\alpha$  we construct a base $\mathscr U_x$ of $\tau$-open neighborhoods of $x$ in the following way. Take $i\in I_\alpha$ and $q\in Q$ such that $x=x_i=(q,\alpha)\in X_\alpha$. Let $\mathscr V_q$ be a base of clopen neighborhoods of $q$ in the space $Q$ equipped with the topology induced from $\mathbb R^2$. Then we put
$$
\mathscr U_x=\{(V\times [0,\omega_1))\cap (L_{\le x}\setminus  \bigcup_{y\in Y}L_{\le y}):V\in\mathscr V_q\,\,\,\mbox{and}\,\,\, Y\subseteq L_{<x}\,\,\mbox{is finite}\}.
$$

\underline{\tt Claim 4. Complete regularity and scatteredness of $X$.} We show that the space $(X,\tau)$ is completely regular. We prove firstly that every set $L_{\leq x}$ is clopen. Since the inclusion $v\in L_{\leq u}$ implies $L_{\leq v}\subseteq L_{\leq u}$, every set $L_{\leq x}$ is open.
Now let $y\in \overline{L_{\leq x}}$. Then $L_{\leq y}\cap L_{\leq x}\ne\emptyset$. Therefore, $L_{\leq y}\subseteq L_{\leq x}$ or $L_{\leq x}\subseteq L_{\leq y}$. Assume that $y\not\in L_{\leq x}$. Then $x\in L_{< y}$ and for a neighborhood $W=L_{\leq y}\setminus L_{\leq x}$ of $y$ we have $W\cap L_{\leq x}=\emptyset$, which implies a contradiction. Thus, $\overline{L_{\leq x}}=L_{\leq x}$ and the set $L_{\leq x}$ is closed.

Notice that for every clopen in $Q$ set $V$ the set $(V\times [0,\omega_1))\cap X$ is clopen in $X$. Therefore, every $U\in\mathscr U_x$ is clopen in $X$ for every $x\in X$. In particular, $(X,\tau)$ is completely regular.

In order to show that $(X,\tau)$ is scattered we take an arbitrary nonempty set $E\subseteq X$ and denote $\alpha=\min\{\xi\in[0,\omega_1): E\cap X_\xi\ne\emptyset\}$. Then any point $x$ from $E\cap X_\alpha$ is isolated in $E$.

\medskip
\underline{\tt Claim 5. $\overline{X_\alpha}=\bigcup_{\xi\geq\alpha}X_\xi$ for every $\alpha\in [0,\omega_1)$.} It is sufficient to prove that
$$
X_\alpha\subseteq\overline{\bigcup_{\xi<\alpha}X_\xi}
$$
for every $\alpha\in [1,\omega_1)$.
Let $\alpha\in [1,\omega_1)$, $x=(q,\alpha)\in X_\alpha$, $V$ be an open neighborhood of $q$ in $Q$, $Y\subseteq L_{<x}$ be a finite set and
$$
U=(V\times[0,\omega_1))\cap (L_{\leq x}\setminus \bigcup_{y\in Y}L_{\leq y}).
$$
We show that $U\cap (\bigcup_{\xi<\alpha}X_\xi)\ne \emptyset$. For every $y\in Y$ we choose $\beta_y<\alpha$ such that $y\in X_{\beta_y}$. We put $\beta={\rm max}\{\beta_y:y\in Y\}$ and $\gamma=\beta+1$. Since $\beta<\alpha$, $\gamma\leq \alpha$. We choose $i\in I_\alpha$ such that $x=x_i$ and choose $j\in J_\gamma$ such that $j{|_{[\alpha,\omega_1)}}=i$. We consider the set $X_{\beta,j}$. Recall that $X_{\beta,j}=A_t\times\{\beta\}$ or $X_{\beta,j}=B_t\times\{\beta\}$, where $t=\varphi_\gamma(j)$. Therefore, the set
$$
P=\{p\in Q:(p,\beta)\in X_{\beta,j}\}
$$
is dense in $Q$. Thus, the set $P\cap V$ is infinite. Moreover, $|L_{\leq y}\cap X_\beta|\leq 1$ for every $y\in Y$. Hence, the set
$$
S=\{p\in P:(p,\beta)\in \bigcup_{y\in Y}L_{\leq y}\}
$$
is finite. Therefore, the set $(P\cap V)\setminus S$ is infinite, in particular, it is nonempty. We choose a point $p\in (P\cap V)\setminus S$. Then $z=(p,\beta)\in X_{\beta,j}$. Thus, $z\in L_{\leq x}$. Moreover, $z\in V\times [0,\omega_1)$ and $z\not\in \bigcup_{y\in Y}L_{\leq y}$. Thus, $z\in U$. Since $X_{\beta,j}\subseteq X_\beta$, $z\in \bigcup_{\xi<\alpha}X_\xi$. Therefore, $U_x\cap \bigcup_{\xi<\alpha}X_\xi\ne \emptyset$ for every $U_x\in {\mathscr U}_x$ and $x\in \overline{\bigcup_{\xi<\alpha}X_\xi}$.

\medskip
\underline{\tt Claim 6. Construction of a Baire-one function $f$.}
We put
$$
C=\bigsqcup_{\xi<\omega_1,\, \xi \small{\mbox{\,\,is even}}} X_\alpha
$$
and show that the function $f:X\to [0,1]$,
$$
f=\chi_{C},
$$
belongs to the first Baire class.

Consider a mapping $\pi:X\to \mathbb Q$, $\pi(x)=r$ if $x=(q,\alpha)$ and $q=(r,t)$ for some $t\in [0,1]$ and $\alpha<\omega_1$. The mapping $\pi$ is continuous, because for every open in $\mathbb Q$ set $V$ the set
$$\pi^{-1}(V)=(V\times [0,1]\times[0,\omega_1))\cap X$$
is open in $X$. Clearly, the function $g:\mathbb Q\to\mathbb R$,
$$
g(t)=\left\{\begin{array}{ll}
             1, & t=r_{2n-1},\\
              0, & t=r_{2n},
            \end{array}
 \right.
 $$
belongs to the first Baire class. Therefore, the function $f(x)=g(\pi(x))$ belongs to the first Baire class too.

\medskip
\underline{\tt Claim 7. The function $f$ is not countably fragmented.} Finally, we prove that $f$ is not countably fragmented. Assume the contrary and take a countable sequence $\mathscr U=(U_\xi:\xi<\alpha)$  of functionally open sets such that $\mathscr U$ is $\frac 12$-associated with $f$.

We show that $U_\beta\subseteq \bigcup_{\xi<\beta}X_\xi$ for every $\beta \leq \alpha$. We will argue  by induction on $\beta$. For $\beta=0$ the assertion is obvious. Assume that the inclusion is valid for all $\beta<\gamma\leq \alpha$. If $\gamma$ is a limit ordinal, then
$$
U_\gamma=\bigcup_{\xi<\gamma}U_\xi\subseteq \bigcup_{\xi<\gamma}\bigcup_{\eta<\xi}X_\eta=\bigcup_{\xi<\gamma}X_\xi.
$$
Now let $\gamma=\delta+1$. Suppose that there exists $x\in U_\gamma\setminus(\bigcup_{\xi\leq\delta}X_\xi)$. Notice that $x\in \overline{X_\gamma}\subseteq \overline{X_\delta}$ according to Claim 6. Therefore, there exist $z_1\in U_\gamma\cap X_\gamma$ and $z_2\in U_\gamma\cap X_\delta$. According to the inductive assumption, we have $U_\delta\subseteq \bigcup_{\xi<\delta}X_\xi$. Therefore, $z_1,z_2\subseteq U_\gamma\setminus U_\delta=U_{\delta+1}\setminus U_\delta$. Thus, we have
$$
1=|f(z_1)-f(z_2)|\leq {\rm diam}(U_{\delta+1}\setminus U_\delta)<\tfrac12,
$$
 a contradiction.

Theorem~\ref{thm:main} implies that $f$ can not be extended to a Baire-one function on $\beta X$.
\end{proof}

\end{document}